\newtheorem{theorem}{Theorem}
\newtheorem{claim}{Claim}
\title{A precolouring extension of Vizing's theorem}
\author{Ant\'onio Gir\~ao\,\thanks{\,Department of Pure Mathematics and
    Mathematical Statistics, University of Cambridge, Cambridge, UK;
    \texttt{A.Girao@dpmms.cam.ac.uk}. This author would like to thank Radboud University for its hospitality.}
  \and Ross J. Kang\,\thanks{\,Department of Mathematics, Radboud
    University, Nijmegen, Netherlands; \texttt{ross.kang@gmail.com}. Supported by a Vidi grant (639.032.614) of the Netherlands Organisation for Scientific Research (NWO).}}
\begin{document}

\maketitle
\begin{abstract}
Fix a palette $\mathcal K$ of $\Delta+1$ colours, a graph with maximum degree $\Delta$, and a subset $M$ of the edge set with minimum distance between edges at least $9$. If the edges of $M$ are arbitrarily precoloured from $\mathcal K$, then there is guaranteed to be a proper edge-colouring using only colours from $\mathcal K$ that extends the precolouring on $M$ to the entire graph. This result is a first general precolouring extension form of Vizing's theorem, and it proves a conjecture of Albertson and Moore under a slightly stronger distance requirement. We also show that the condition on the distance can be lowered to $5$ when the graph contains no cycle of length $5$.
\end{abstract}

%MSC class: 05C15

\section{Introduction}

Suppose that we need to plan a sports competition for which some of the matches have already been prescheduled. Under what circumstances do we find that the prescheduled matches do not adversely affect the eventual length of the competition schedule? This problem may be modelled by edge-precolouring extension: under what conditions is it always possible, given a graph and some partial proper (pre)colouring of its edges, to extend the precolouring to an optimal proper edge-colouring of the entire graph?

Recall that the classical theorem on proper edge-colouring due to Vizing~\cite{Viz64} (cf.~also Gupta~\cite{Gup74}) states that the chromatic index $\chi'(G)$ of a simple graph $G$ is either the maximum degree $\Delta(G)$ of $G$ or one larger.

In fact, we are mainly interested in whether there is some constant $c$ such that, if $\Delta(G) +1$ colours are allowed and the precoloured edges have pairwise distance at least $c$, then any precolouring can be extended to a proper edge-colouring of all of $G$. To be clear, when we refer to the distance between two edges we mean the number of edges in a shortest path between two of their endpoints. 
%(regardless of whether $\chi'(G)$ is $\Delta(G)$ or $\Delta(G)+1$)

Albertson and Moore~\cite{AlMo01} conjectured that such a constant $c$ exists and equals $3$, noting that the first case $\Delta(G)=3$ follows from~\cite{JMS98}.
In earlier work together with Edwards, van den Heuvel, Puleo, and Sereni~\cite{EGHKPS16+}, we showed that $c\ge 2$ (if it exists) and that 
no such constant $c$ exists if we instead  only allow $\Delta(G)$ colours for graphs $G$ for which $\chi'(G)=\Delta(G)$.%~\cite{EGHKPS16+}.

Our goal in the present paper is to show that such a constant $c$ does indeed exist. In particular, any precolouring of a graph $G$ where the precoloured edges are at pairwise distance at least $9$ can be extended to a proper edge-colouring of $G$ using at most $\Delta(G)+1$ colours. 

We should point out that our result fits within the broader context of vertex-precolouring extensions, which has seen a great deal of activity, cf.~e.g.~\cite{AlMo99,Tuz97}. This is an important topic in chromatic graph theory, especially due to Thomassen's ingenious use of precolouring extension to prove that all planar graphs are $5$-choosable~\cite{Tho94}. Notably, in a strikingly short answer to a related question of Thomassen, Albertson~\cite{Alb98} showed that,
given a graph $G$ with chromatic number $\chi(G)=r$ and a set $P$ of vertices with pairwise distance at least $4$, any precolouring of $P$ from a palette of $r+1$ colours extends to a proper $(r+1)$-colouring of $G$. Moreover, the distance condition $4$ is best possible. %There has been a great deal of activity on precolouring extension, .

Most previous work on (vertex-)precolouring extension allows for one additional colour as in Albertson's seminal result. We highlight two relevant exceptions. Albertson and Moore~\cite{AlMo01} considered how to extend partial $r$-colourings of $r$-chromatic graphs, and proved several sharp results; however, these results only apply to graphs that possess a special $r$-colouring. Later, Axenovich~\cite{Axe03} and Albertson, Kostochka and West~\cite{AKW04} proved that $\Delta$-precolourings of a set of vertices with minimum distance $8$ can be extended to full $\Delta$-colourings for any graph of maximum degree at most $\Delta\ge 3$ apart from $K_{\Delta+1}$, thereby extending Brooks' theorem.

It is worth noting that an edge-precolouring paper of Marcotte and Seymour~\cite{MaSe90} is one of the first papers on precolouring extension, even if it considered the problem differently, from the viewpoint of polyhedral optimisation.
Note, moreover, that a particular case of edge-precolouring extension was already considered in the 1960s in relation to Evans' conjecture, cf.~e.g.~\cite{Sme81}, by the interpretation of proper edge-colourings of complete bipartite graphs in terms of Latin squares and rectangles.

Here is our main result, restated slightly more generally in terms of multigraphs.

\begin{theorem}\label{thm:main}
Let $G$ be a multigraph of maximum edge multiplicity $\mu$ and maximum degree $\Delta$, and let $M$ be a set of edges such that the minimum distance between two edges of $M$ is at least $9$.
If $M$ is arbitrarily precoloured from the palette $\mathcal K = [\Delta+\mu] = \{1,\dots,\Delta+\mu\}$, then there is a proper edge-colouring of $G$ using colours from $\mathcal K$ that agrees with the precolouring on $M$.
\end{theorem}

\noindent
Note that when $M$ is empty, this result is exactly the aforementioned classic theorem of Vizing and Gupta applied to the multigraph case.

To prove Theorem~\ref{thm:main} we make use of a result of Berge and Fournier~\cite{BeFo91}, which is an edge-precolouring extension result in the case when the precoloured set uses only one colour. We need only a special case. 

%[Professor Bollobas suggests to remove the reference of the next theorem]

\begin{theorem}[Fournier, cf.~\cite{BeFo91}]\label{thm:BeFo}
Let $G$ be a multigraph of maximum edge multiplicity $\mu$ and maximum degree $\Delta$,
and let $M$ be a maximal matching of $G$.
Then there exists a proper $(\Delta+\mu-1)$-edge-colouring of $G\setminus M$.
\end{theorem}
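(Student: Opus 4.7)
The plan is to prove the statement by induction on $|E(G)|$, adapting Vizing's classical fan argument. Write $H = G \setminus M$; the goal is to exhibit a proper $(\Delta+\mu-1)$-edge-colouring of $H$. The base case, when $H$ has no edges, is trivial.

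For the inductive step, the crucial consequence of the maximality of $M$ is that every edge of $H$ has at least one endpoint in $V(M)$, and every vertex of $V(M)$ has degree at most $\Delta - 1$ in $H$. Pick an edge $e = xy$ of $H$ and assume, without loss of generality, that $x \in V(M)$. Since $x$ is still saturated by $M$ in $G - e$, the matching $M$ remains maximal there, and the inductive hypothesis produces a proper $(\Delta+\mu-1)$-edge-colouring $\phi$ of $H - e$. In $\phi$, the vertex $x$ has degree at most $\Delta - 2$ and therefore misses at least $\mu + 1$ colours, while $y$ misses at least $\mu$. If the two sets of missing colours intersect, any common colour extends $\phi$ to $e$ and completes the step.

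Otherwise, we perform a Vizing-style fan at $x$: starting from $v_0 = y$, we iteratively select a colour $\alpha_i$ missing at $v_i$ and, since $\alpha_i$ cannot be missing at $x$, choose an edge $xv_{i+1}$ coloured $\alpha_i$. The extra missing colour at $x$, compared with the $\mu$-deficit available in the proof of Theorem~\ref{thm:GuVi}, is precisely what is needed to guarantee that the fan eventually either terminates in a rotation that colours $e$ outright, or admits a Kempe $\alpha\beta$-chain whose inversion creates a common missing colour at $x$ and some $v_i$.

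The principal obstacle is running the fan argument with multiplicity $\mu > 1$: parallel edges at $x$ may carry distinct colours, so the argument must be phrased in terms of multisets of missing colours rather than single colour-deficits, and chains must be traced carefully through vertex pairs with several parallel edges. This is handled by the multifan refinement of Vizing's technique, essentially already present in~\cite{Gup74}. With this in place the inductive step closes, and the theorem follows.
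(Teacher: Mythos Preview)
The paper does not supply its own proof of Theorem~\ref{thm:BeFo}; the result is quoted from~\cite{BeFo91} and used as a black box in the proofs of Theorems~\ref{thm:main} and~\ref{thm:c5free}. There is therefore nothing in the paper to compare your sketch against.

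Your outline is the standard route and is essentially sound: every edge of $H=G\setminus M$ has an endpoint in $V(M)$, so one may always pivot the fan at a vertex $x$ of $H$-degree at most $\Delta-1$, which then misses at least $\mu+1$ colours even with only $\Delta+\mu-1$ available. One point deserves more care, however. The sentence ``the extra missing colour at $x$ \dots\ is precisely what is needed'' glosses over the fact that a fan leaf $v_i\notin V(M)$ may have $d_H(v_i)=\Delta$ and so miss only $\mu-1$ colours (none at all when $\mu=1$); the single-swap Vizing fan can then stall, and the count of missing colours at $x$ alone does not prevent this. What actually rescues the argument is that vertices of $H$-degree $\Delta$ all lie outside $V(M)$ and hence, by maximality of $M$, form an independent set; combined with the fan inequality (equivalently, Vizing's adjacency lemma), this yields the required contradiction whenever the extension fails. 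Your appeal to the multifan machinery ultimately covers this, but it is worth being explicit that the independence of the top-degree vertices in $H$ is where the maximality of $M$ is genuinely spent, not merely the tally of missing colours at the pivot.
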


\noindent
Like the proof in~\cite{BeFo91}, our proof of Theorem~\ref{thm:main} is short. Roughly, we take the following strategy.
Let $M' \supseteq M$ be a maximal matching of $G$.
By Theorem~\ref{thm:BeFo}, there is a proper edge-colouring of $G\setminus M'$ using only colours from $[\Delta+\mu-1]$.
We would like to colour all edges of $M'\setminus M$ with colour $\Delta+\mu$. For every $1\leq i \leq \Delta+\mu-1$, if there is an edge of $G\setminus M'$ that is coloured $i$ and incident to some edge precoloured $i$, then we would like to recolour that edge with the colour $\Delta+\mu$.
In the proof, we use a recolouring argument, using Vizing fans, to help us resolve the problems that arise.

Using the same strategy, we also show how we can afford to relax the distance constraint on the precoloured matching provided we impose a mild structural constraint on the graph.

\begin{theorem}\label{thm:c5free}
Let $G$ be a multigraph of maximum edge multiplicity $\mu$ and maximum degree $\Delta$. Suppose $G$ contains no cycle of length $5$ as a subgraph. Let $M$ be a set of edges such that the minimum distance between any two edges is at least $5$. If $M$ is arbitrarily precoloured from the palette $\mathcal{K}=[\Delta+\mu]=\{1,\ldots, \Delta+ \mu\}$, then there is a proper edge-colouring of $G$ using colours from $\mathcal{K}$ that agrees with the precolouring on $M$. 
\end{theorem}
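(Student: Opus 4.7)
The plan is to follow the scheme sketched for Theorem~\ref{thm:main}, and to exploit the forbidden $5$-cycle to compensate for the weaker distance hypothesis. I would first extend $M$ to a maximal matching $M'$ of $G$ and apply Theorem~\ref{thm:BeFo} to obtain a proper edge-colouring $\phi$ of $G \setminus M'$ using the palette $[\Delta+\mu-1]$. Extend $\phi$ to all of $G$ by retaining the precolouring on $M$ and assigning colour $\Delta+\mu$ to every edge of $M' \setminus M$. Since $G \setminus M'$ avoids colour $\Delta+\mu$ and $M'$ is a matching, the only violations of properness occur at a vertex $u$ incident to a precoloured $e = uv \in M$ of colour $i \in [\Delta+\mu-1]$ and to some $f = uw \in G \setminus M'$ with $\phi(f) = i$. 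Each $e \in M$ generates at most one such conflict at each of its endpoints, and I would resolve these independently.

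Fix a conflict pair $(e, f)$. If $\Delta+\mu$ is missing at $w$, simply recolour $f$ with $\Delta+\mu$. Otherwise $w$ is saturated in $M'$ by some edge $g = ww'$ with $\phi(g) = \Delta+\mu$, and since the shortest path between an endpoint of $e$ and an endpoint of $g$ has length $1$, the distance-$5$ hypothesis forces $g \in M' \setminus M$. In this case I would run a Vizing-fan argument at $w$ on the edge $f$: either I find a colour in $[\Delta+\mu-1] \setminus \{i\}$ missing at both $u$ and $w$ and use it directly, or I build a fan $u = v_0, v_1, \dots, v_k$ of neighbours of $w$ with $\phi(wv_{j+1})$ missing at $v_j$ and conclude by rotating colours along the fan, possibly preceded by a Kempe-chain swap in two colours $\alpha, \beta$ along an alternating path from some $v_j$. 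In every case $f$ is recoloured to something other than $i$, the conflict at $e$ is resolved, and no new conflict arises at $u$.

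The hard part will be controlling the geometric reach of these recolourings. The fan itself lives within distance $1$ of $w$, hence distance $2$ of $e$, and the distance-$5$ hypothesis on $M$ together with the triangle inequality makes fans at two distinct precoloured edges vertex-disjoint. The only real danger is the Kempe chain, which in general can wander arbitrarily far; here the $C_5$-free assumption should do the work. I would argue that if the chain is forced to extend beyond distance $2$ of $w$, then the alternating path, together with an edge of the fan and the matching edge $g$, must close up into a $5$-cycle, contradicting the hypothesis. This confines every local recolouring to a ball of radius $2$ about $w$, making separate recolourings non-interacting, and processing each conflict in turn yields the desired proper $(\Delta+\mu)$-edge-colouring of $G$ that agrees with the precolouring on $M$. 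The main obstacle is precisely this case analysis: verifying that every potential extension of the Kempe chain beyond the safe radius really produces a $5$-cycle, and handling the interaction with edges of $M' \setminus M$ that are themselves recoloured along the way.
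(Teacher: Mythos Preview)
Your setup matches the paper's, but the repair mechanism has a real gap. The claim that a Kempe chain extending beyond distance $2$ of $w$ must, together with a fan edge and the matching edge $g$, close into a $5$-cycle is not justified and is in general false: an $(\alpha,\beta)$-alternating path can be arbitrarily long, and nothing about $C_5$-freeness prevents that---forbidding $5$-cycles does not forbid long paths. So as written, your recolouring can leak arbitrarily far from $e$ and interfere with other precoloured edges, and the ``main obstacle'' you flag is not a case analysis that can be completed but a claim that fails.

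The paper avoids Kempe chains altogether by a different extremal setup. It takes $M'$ of \emph{minimum} cardinality among matchings containing $M$ for which $G\setminus M'$ has a $(\Delta+\mu-1)$-colouring, and among such pairs $(M',\varphi)$ minimises the number of bad edges. The pivot is one step further out than your $w$, at the far endpoint $w_2$ of the edge $e_2\in M'\setminus M$ adjacent to the conflict edge $e_1$; a maximal multi-fan at $w_2$ with respect to the \emph{uncoloured} edge $e_2$ can, by Vizing's fan equation together with the minimality of $|M'|$, be shifted so that $e_2$ receives a colour from $[\Delta+\mu-1]$ and some other edge $f_p$ incident to $w_2$ replaces it in $M'$---a purely local move with no alternating path swapped. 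The $C_5$-free hypothesis is then used only twice and very concretely: after one shift, minimality of $\beta$ forces $e$ to remain bad via a fresh path $e,e_1',e_2'$ on the other side; one checks $e_2'\ne f_p$ (else $e,e_1,e_2,f_p,e_1'$ is a $5$-cycle), performs a second shift there, and checks that the new fan endpoint avoids the common vertex of $e_1$ and $e_2$ (else another $5$-cycle). After two shifts both sides of $e$ are exhausted, contradicting the minimality of $\beta$.
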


\section{Proofs}

\begin{proof}[Proof of Theorem~\ref{thm:main}]
Let $\Phi : M\to \mathcal K$ be a precolouring of $M$. For each $i\in \mathcal K$, write $M_i \subseteq M$ for the edges precoloured $i$.
Let $\alpha$ be the cardinality of a smallest matching $M' \supseteq M$ such that there exists a proper $(\Delta+\mu-1)$-edge-colouring of $G\setminus M'$.
By Theorem~\ref{thm:BeFo} $\alpha$ is well defined.

For a matching $M' \supseteq M$ and a proper edge-colouring $\varphi : E(G\setminus M') \to [\Delta+\mu-1]$, we define $\Delta+\mu-1$ sets $A_1^{M',\varphi},\dots,A_{\Delta+\mu-1}^{M',\varphi}$ as follows. Let $1\leq i \leq \Delta+\mu-1$. For each endpoint $u$ of an edge of $M_i$, we let $A_u^{M',\varphi}$ denote all edges that are contained in the maximal path $P_u^{M',\varphi}$ beginning at $u$ that alternates between edges coloured $i$ by $\varphi$ and edges of $M'\setminus M$.
We shall write $P_u^{M',\varphi}=w^u_0e^u_0w^u_1e^u_1w^u_2e^u_2\cdots$ with vertices $w^u_k$ and edges $e^u_k$, where $w^u_0=u$ and $\varphi(e^u_0)=i$.
We take $A_i^{M',\varphi}$ to be the union of all $A_u^{M',\varphi}$, with $u$ an endvertex of an edge of $M_i$. So $A_i^{M',\varphi}\cup M_i$ induces a disjoint union of paths and cycles.

We note that, if we find $M'$ and $\varphi$ such that {\em every} set $A_u^{M',\varphi}$ contains at most one edge, then we are done by giving colour $\Delta+\mu$ to every such edge and to every edge of $M'\setminus M$. (This is the strategy we described informally before the proof.)

More importantly, we are also done if we find $M'$ and $\varphi$ such that $A_i^{M',\varphi}$ induces a subgraph that is disconnected from that of $A_j^{M',\varphi}$ for every $i\ne j\in[\Delta+\mu-1]$ as well as from $M_{\Delta+\mu}$. This is true because then we can give colour $\Delta+\mu$ to every edge in $A_i^{M',\varphi}\setminus M'$ and colour $i$ to every edge in $A_i^{M',\varphi}\cap M'$ for every $1 \leq i \leq \Delta+\mu-1$.

We now fix a choice of $M' \supseteq M$ and $\varphi : E(G\setminus M') \to [\Delta+\mu-1]$ such that $|M'|=\alpha$.
Moreover, we make our choice so that it minimises the number $\beta$ of endpoints $u$ of edges in $M$ for which $|A_u^{M',\varphi}| > 1$,
and subject to that it minimises the number $\gamma$ of edges $e$ in $M$ with endpoints $u$ and $v$ for which
there is an even index $t$ at which either of the path vertices $w^u_t$ or $w^v_t$ is at distance at least $3$ from $e$.
 
The rest of the proof is devoted to showing that under this choice any two subgraphs induced by $A_i^{M',\varphi}$, for $1\leq i \leq \Delta+\mu-1$, share no vertex and are also vertex disjoint from  $M_{\Delta+\mu}$.

\begin{claim}
For any edge in $M$ with endpoints $u$ and $v$, either $|A_u^{M',\varphi}| \le 1$ or $|A_v^{M',\varphi}| \le 1$.
\end{claim}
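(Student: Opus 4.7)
My plan is to argue by contradiction against the minimality of $\beta$ in the choice of $(M',\varphi)$. Assume some edge $uv\in M_i$ satisfies both $|A_u^{M',\varphi}|\ge 2$ and $|A_v^{M',\varphi}|\ge 2$, so that in particular the edges $e_0^u$ and $e_1^u$ both exist (and similarly for $v$); the goal is to produce a proper $(\Delta+\mu-1)$-edge-colouring $\varphi'$ of $G\setminus M'$ in which $u$ no longer contributes to the $\beta$-count while no other $M$-endpoint begins to contribute, contradicting the choice of $\varphi$.

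The target move is to recolour the single $\varphi$-edge at $u$ that is coloured $i$, namely $e_0^u = u w_1^u$, to some other colour $c\in[\Delta+\mu-1]\setminus\{i\}$; once this is done, colour $i$ is absent at $u$ under $\varphi'$, so $A_u^{M',\varphi'}$ is empty. If some colour is simultaneously missing at $u$ (where the slot for $uv\in M'$ guarantees at least $\mu$ missing colours in $[\Delta+\mu-1]$) and at $w_1^u$ (where the slot for $e_1^u\in M'$ also guarantees at least $\mu$ missing), we are done. In general these two missing sets can be disjoint, and one must work harder.

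In that general case I would build a Vizing--Fournier fan rooted at $u$ with $e_0^u$ as the temporarily uncoloured edge: a maximal sequence $f_0=e_0^u,\,f_1=uy_1,\,f_2=uy_2,\dots$ of edges at $u$ in $G\setminus M'$ where each $f_{j+1}$ has $\varphi$-colour missing at the previous endpoint $y_j$. The saturation of $u$ by $uv$ supplies the same slack ($\ge \mu$ missing colours at $u$) that powers Fournier's proof of Theorem~\ref{thm:BeFo}, and the standard fan dichotomy closes the argument: either some fan colour is also missing at $u$ and a rotation suffices, or a colour is repeated and a bichromatic Kempe swap resolves it. We may always arrange the new colour on $e_0^u$ to differ from $i$: the initial fan colour $c_0$ is missing at $w_1^u$, but $i$ is present at $w_1^u$ via $e_0^u$ itself, so $c_0\ne i$; and since $i$ is present at $u$ via $uv$, no later rotation can introduce $i$ at $u$ either.

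The main obstacle is verifying that this local surgery does not raise $\beta$ anywhere else. Every edge touched by the fan or by the Kempe chain lies within a small, bounded graph distance of $u$, and by the distance-$9$ hypothesis no endpoint of an $M$-edge other than $u$ or $v$ sits in this ball. For any other $M$-endpoint $u'$ not contributing to $\beta$, the path $A_{u'}^{M',\varphi}$ has length at most $1$ and hence cannot reach the modified region; and any $u''$ with $u''v''\in M_c$ could inherit the newly $c$-coloured $e_0^u$ into its alternating path only via a route of length $\ge 8$, which again is excluded by $|A_{u''}^{M',\varphi}|\le 1$. The genuinely delicate point is the partner endpoint $v$: although $v$ already contributes to $\beta$, one must check that $A_v^{M',\varphi'}$ does not become malformed in a way that spoils the count, and this is where I expect the tertiary minimality of $\gamma$ to be brought in to constrain the reroute of the path from $v$. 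This last bookkeeping step, rather than the fan itself, is where I expect the bulk of the technical work to sit.
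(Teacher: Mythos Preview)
Your approach diverges from the paper's in a way that introduces a genuine gap. The paper does \emph{not} pivot at $u$ and does not try to recolour $e_0^u$. Instead it pivots at $w_2^u$ and attempts to colour the $M'$-edge $e_1^u$. Minimality of $\alpha$ guarantees that $G\setminus(M'\setminus\{e_1^u\})$ has no proper $(\Delta+\mu-1)$-edge-colouring, and Vizing's multi-fan equation then forces the maximal multi-fan at $w_2^u$ to end at some $x_p$ not incident to $M'$. One then shifts colours inside the fan so that $e_1^u$ becomes coloured and $f_p$ becomes the new uncoloured edge of $M'$. The crucial feature is that every modification --- both the colour changes and the swap $e_1^u\leftrightarrow f_p$ in $M'$ --- occurs on edges incident to the single vertex $w_2^u$; no Kempe chain is ever used, so locality is automatic.

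Your argument, by contrast, invokes the standard Vizing fan dichotomy at $u$, whose second branch is a bichromatic Kempe swap. Such a swap has no a priori length bound, so your claim that ``every edge touched by the fan or by the Kempe chain lies within a small, bounded graph distance of $u$'' is false. A $(c,c')$-Kempe chain can reach arbitrarily far from $u$ and pass through the neighbourhood of a distant $M$-endpoint $u'$ whose edge is precoloured $c$ or $c'$; after the swap the colour-$\Phi(u'v')$ edge at $u'$ may change, and the new one may lead to a vertex that \emph{is} $M'$-saturated, pushing $|A_{u'}|$ past $1$ and increasing $\beta$. The distance-$9$ hypothesis separates $M$-edges from one another, not from long Kempe chains rooted at $u$, so it cannot rescue this step.

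Two smaller remarks. First, $\gamma$ plays no role in the proof of this claim; it is used only for the second claim. Second, your concern about $v$ is misplaced: since $|A_v^{M',\varphi}|>1$ by assumption, $v$ already contributes to $\beta$, and once $u$ ceases to contribute (and no new endpoint starts contributing) $\beta$ strictly drops regardless of what happens at $v$.
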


Suppose otherwise. 
We now construct a maximal Vizing fan pivoting on $w^u_2$, where our aim is to colour $e^u_1$ with a colour from $[\Delta+\mu-1]$ and adjust $\varphi$ so that it becomes a proper $(\Delta+\mu-1)$-edge-colouring of $G\setminus (M' \setminus \{e^u_1\})$.
If this succeeds, then there is a contradiction with the minimality of $\alpha$.

We follow a standard convention in edge-colouring by writing $\varphi(z)$ for the set of colours appearing on the edges incident to $z$ and $\overline{\varphi}(a)$ for $[\Delta+\mu-1]\setminus \varphi(z)$ where $z$ is any vertex of the graph.

Recall~\cite{SSTF12} that a {\em multi-fan} at $z$ with respect to the edge $e$ and $\varphi$ is a sequence $F = (f_1,x_1,\dots,f_p,x_p)$, $p\ge1$, where $f_1,\dots,f_p$ are distinct edges, $f_1=e$, and $f_k$ has endpoints $z$ and $x_k$ for all $k\in[p]$.
Furthermore, for every edge $f_k$, $k>1$, there is a vertex $x_\ell$, $\ell\in[k-1]$, such that $\varphi(f_k)\in \overline{\varphi}(x_\ell)$.

We have already argued above that there is no proper $(\Delta+\mu-1)$-edge-colouring of $G\setminus (M' \setminus \{e^u_1\})$. 
Let us then choose $F = (f_1,x_1,\dots,f_p,x_p)$ to be a maximal multi-fan at $w^u_2$ with respect to $e^u_1$ and $\varphi$. By Vizing's fan equation (cf.~\cite[Theorem~2.1(d)]{SSTF12}), we have that $p\ge 2$ and moreover $x_p$ cannot be incident to an edge of $M'$ as otherwise the fan would not be maximal.

Now we can ``shift'' colours along the multi-fan so that $f_1=e^u_1$ receives a colour from $[\Delta+\mu-1]$ and instead add $f_p$ to $M'$.
Note that, for any $u'\ne u$ with $|A_{u'}^{M',\phi}|\leq 1$, adding the edge $f_p$ to $M'\setminus{\{f_1\}}$ does not affect the size of $A_{u'}^{M',\phi}$ due to the distance requirement between precoloured edges. 
Thus this new choice of $M'$ and $\varphi$ (still has $|M'|=\alpha$ and) contradicts the minimality of $\beta$.
This completes the proof of the claim.

\begin{claim}
$\gamma=0$.
\end{claim}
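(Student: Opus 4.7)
The plan is to suppose $\gamma > 0$ and derive a contradiction by modifying $M'$ and $\varphi$ via a Vizing multi-fan to produce a new pair with the same $\alpha$ and $\beta$ but strictly smaller $\gamma$. Among edges $e = uv \in M$ that witness $\gamma > 0$, I would apply Claim~1 to obtain (without loss of generality) $|A_v^{M',\varphi}| \le 1$ and $|A_u^{M',\varphi}| > 1$. Let $t^*$ be the smallest even index such that $w_{t^*}^u$ is at distance at least $3$ from $e$; since $w_2^u$ lies within distance $2$ of $u$, we have $t^* \ge 4$.

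Mirroring the proof of Claim~1, I would build a maximal multi-fan $F = (f_1, x_1, \ldots, f_p, x_p)$ at $w_{t^*}^u$ with $f_1 = e_{t^*-1}^u$ and $x_1 = w_{t^*-1}^u$. The minimality of $\alpha$ again rules out any proper $(\Delta+\mu-1)$-edge-colouring of $G \setminus (M' \setminus \{e_{t^*-1}^u\})$, so Vizing's fan equation yields $p \ge 2$ and $x_p$ not incident to any edge of $M'$. Performing the shift, I would set $M'' = M' \cup \{f_p\} \setminus \{e_{t^*-1}^u\}$ and let $\varphi'$ denote the resulting colouring. Then $|M''| = \alpha$, $\varphi'$ properly colours $G \setminus M''$, and no precoloured edge is disturbed, since the only $M'$-edge at the pivot $w_{t^*}^u$ is $e_{t^*-1}^u \in M' \setminus M$.

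It remains to check that $\beta$ is preserved and $\gamma$ strictly decreases. The new path $P_u^{M'',\varphi'}$ agrees with $P_u^{M',\varphi}$ up to $w_{t^*-1}^u$ and then stops (no $M''$-edge is incident there), so $|A_u^{M'',\varphi'}| = t^* - 1 \ge 3$ and $u$ still contributes to $\beta$; the path $P_v$ is untouched because $w_{t^*}^u$ lies at distance at least $3$ from $v$; and for any other precoloured endpoint $u'$ of an edge $e' \ne e$, I would use the distance-$9$ hypothesis, combined with the fact that the shift only recolours edges incident to $w_{t^*}^u$, to argue that $P_{u'}^{M'',\varphi'}$ either coincides with $P_{u'}^{M',\varphi}$ or is locally rerouted at $w_{t^*}^u$ without changing whether $|A_{u'}| > 1$. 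Simultaneously, the even-indexed vertices of the new $P_u$ are $w_0, w_2, \ldots, w_{t^*-2}$, all at distance at most $2$ from $e$ by the minimality of $t^*$, so $e$ no longer contributes to $\gamma$, while the $\gamma$-contributions of other precoloured edges are preserved by the same local-modification argument; hence $\gamma$ strictly decreases, contradicting its minimality.

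The hard part is the bookkeeping in the last paragraph: one must verify that the local Vizing shift at $w_{t^*}^u$ does not pathologically perturb the alternating structures $P_{u'}$ rooted at other precoloured edges $e'$, neither converting a trivial $A_{u'}$ into a non-trivial one (which would raise $\beta$) nor producing a new far-away even-indexed vertex (which would introduce a new contribution to $\gamma$). The distance-$9$ hypothesis on $M$ is precisely what isolates $w_{t^*}^u$ from the other precoloured edges enough to make this analysis go through.
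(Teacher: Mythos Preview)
Your approach is essentially the same as the paper's: pivot at the first far even vertex $w^u_{t^*}$ on $P_u$, build a maximal multi-fan there with respect to $e^u_{t^*-1}$, use the minimality of $\alpha$ together with the fan equation to get $p\ge 2$ and $x_p$ not incident to $M'$, shift to swap $e^u_{t^*-1}$ for $f_p$ in the matching, and then argue the new pair has the same $\alpha$, no larger $\beta$, and strictly smaller $\gamma$.

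A few remarks on points where you are actually more careful than the paper. You explicitly verify that $t^*\ge 4$ and that the new $P_u$ terminates at $w^u_{t^*-1}$ with $|A_u|=t^*-1\ge 3$, so $u$'s contribution to $\beta$ is unchanged; the paper leaves this implicit. You also explicitly flag that $\beta$ must not increase, which the paper does not mention at all in the proof of this claim, although it is needed for the lexicographic contradiction. Your distance argument for other precoloured edges $e'$ (if $e'$ did not contribute to $\gamma$ then all vertices of $P_{u'}$ lie within distance $3$ of $e'$, hence at distance at least $6$ from $e$, hence at distance at least $2$ from $w^u_{t^*}$, so $P_{u'}$ is untouched by the shift) is the contrapositive of the paper's phrasing (``if $f_p$ is appended to $P_w$ then $e'$ already contributed to $\gamma$''), and is arguably cleaner since it automatically handles the recoloured fan edges $f_2,\dots,f_{p-1}$ as well, not just $f_p$.

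One small wording issue: you write that $\beta$ is ``preserved'', but strictly speaking you only need (and only establish) that it does not increase; if it were to decrease you would already contradict the minimality of $\beta$ directly, before even looking at $\gamma$. This does not affect the validity of the argument.
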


Suppose for a contradiction that there is an edge $e$ in $M$ with endpoints $u$ and $v$ for which there is an even index $t$ at which either of the path vertices $w^u_t$ or $w^v_t$ is at distance at least $3$ from $e$. Suppose $t$ is the smallest such index. By the first claim we may assume without loss of generality that it is only the case for $w^u_t$.  Note that $w^u_t$ has distance $3$ or $4$ from $e$, and $w^u_s$ is at distance at most $2$ from $e$ for each $s<t-1$.

There is no proper $(\Delta+\mu-1)$-edge-colouring of $G\setminus (M' \setminus \{e^u_{t-1}\})$, or else there would be a contradiction with the choice of $\alpha$.
We now choose $F = (f_1,x_1,\dots,f_p,x_p)$ to be a maximal multi-fan at $w^u_t$ with respect to $e^u_{t-1}$ and $\varphi$. Again, by Vizing's fan equation, we have that $p\ge 2$ and that $x_p$ cannot be incident to an edge of $M'$. Again, we ``shift'' colours along the multi-fan so that $f_1=e^u_{t-1}$ receives a colour from $[\Delta+\mu-1]$ and instead add $f_p$ to $M'$.

Note that $x_p$ is at distance between $2$ and $5$ from $e$. It thus follows from the first claim that this new choice of $M'$ and $\varphi$ does not append $f_pw^u_te^u_t\dots$ to the path $P_v^{M',\varphi}$. So under this new choice, there is no even index $t$ at which either of the path vertices $w^u_t$ or $w^v_t$ is at distance at least $3$ from $e$. On the other hand, we could have appended $f_p$ to another path $P_w^{M',\varphi}$, but we are then guaranteed by the distance condition on $M$ that in the old choice there was already an even index $t$ for which $w^w_t$ has at distance at least $3$ from its corresponding edge in $M$. So our new choice contradicts the minimality of $\gamma$.

This completes the proof of the claim.

\medskip

This claim implies that for all $i\in[\Delta+\mu-1]$ all edges of $A_i^{M',\varphi}$ are within distance $3$ of an edge of $M_i$.
So by the distance condition on $M$ the subgraphs induced by $A_i^{M',\varphi}$, $1\leq i\leq \Delta+\mu-1$, are disconnected from one another and from $M_{\Delta+\mu}$, so this completes the proof.
\end{proof}

From our  proof we see that we could slightly relax the condition in Theorem~\ref{thm:main} on the precoloured matching. Indeed, we could demand that $M$ is a disjoint union of matchings $M'$ and $M''$, where $M'$ has minimum distance at least $9$ and is arbitrarily precoloured from $[\Delta+\mu-1]$, $M''$ is precoloured $\Delta+\mu-1$, and the minimum distance between an edge in $M'$ and an edge in $M''$ is at least $4$.

%We are also able to drop the distance requirement to $5$ if the graph contains no cycle of length $5$. 
The proof of Theorem~\ref{thm:c5free} is conceptually the same as that of Theorem~\ref{thm:main}, but simpler.

\begin{proof}[Proof of Theorem~\ref{thm:c5free}]
Just as before, let $\Phi : M \to \mathcal{K}$ be the precolouring on $M$. Let $\alpha$ be the cardinality of a smallest matching $M' \supseteq M$ such that there exists a proper $(\Delta+\mu-1)$-edge-colouring of $G\setminus M'$. Theorem~\ref{thm:BeFo} certifies that $\alpha$ is well defined.
 
For any matching $M' \supseteq M$ and any proper edge-colouring $\varphi : E(G\setminus M') \to [\Delta+\mu-1]$
we say that an edge $e\in M$ is {\em bad} if there exist $e_1$ and $e_2$ such that $\varphi (e_{1})=\Phi(e)$, $e_{2}\in M'\setminus M$ and $e_1$ is adjacent to both $e$ and $e_2$. If there are no bad edges, then we may extend  $\Phi$ to a proper edge-colouring of $G$ by colouring any edge $e\notin M'$ with $\varphi(e)$ and any edge $e\in M'\setminus M$ with $\Delta + \mu$. 
We now fix a choice of $M'$ and $\varphi$ with $|M'|=\alpha$ and, subject to this, having the least number $\beta$ of bad edges. 
The rest of the proof is devoted to showing $\beta=0$. 

Suppose $e\in M$ is a bad edge and let $e_1$ and $e_2$ be edges certifying its badness as defined above. 
We have that $e$, $e_1$, $e_2$ form a path of length $3$. Calling $w_2$ the endpoint of this path that is incident with $e_2$, let $F = (f_1,x_1,\dots,f_p,x_p)$ be a maximal multi-fan at $w_2$ with respect to $e_2$
and $\varphi$. As in the previous proof, by Vizing's fan equation $p\ge 2$ and $x_p$ is not incident with an edge of $M'$. By ``shifting'' the colours along $F$, we can colour $f_1=e_2$ from $[\Delta+\mu-1]$ and add $f_p$ to $M'$. 

Under this new choice of $\varphi$ and $M'$ (which still has $|M'|=\alpha$), any new bad edge would have to be within distance $1$ of $f_p$ and thus within distance $4$ of $e$, contradicting the distance requirement on $M$. 
Due to the shift, $e$ can no longer be certified bad with the help of $e_1$ since $e_1$ is no longer incident to an edge of $M'$.
However, by the choice of $\beta$, it must be that $e$ has remained bad with respect to the new choice of $\varphi$ and $M'$.
So there exist $e_1'$ and $e'_2$ certifying that $e$ is bad such that $\{e_1,e_2\}\cap \{e_1',e_2'\} =\emptyset$ and the union of endpoints of $e_1$ and $e_2$ is disjoint from that of $e_1'$ and $e_2'$.
We have furthermore that $e_2' \ne f_p$ or else 
$e$, $e_1$, $e_2$, $f_p$, $e_1'$ would form a cycle of length $5$. Clearly $e_2'$ and $f_p$ are not incident as both belong to $M'$.

We can perform another pivot like before but instead at the end of the path $ee_1'e_2'$. Calling $w_2'$ the endpoint of this path that is incident with $e_2'$, a maximal multi-fan $F' = (f_1',x_1',\dots,f_{p'}',x_{p'}')$ at $w_2'$ with respect to $e_2'$ and $\varphi$ must have $p'\ge 2$ and must end at a vertex $x_{p'}'\notin M'$.
In particular, $x_{p'}'$ and $f_p$ are not incident.
Moreover $x_{p'}$ is not the common endpoint of $e_1$ and $e_2$ or else there would be a cycle of length $5$.
Again we shift the colours along $F'$ so as to colour $f_1'=e_2'$ from $[\Delta+\mu-1]$ and add $f_{p'}'$ to $M'$.
 
Arguing in the same way as before, under this second new choice of $\varphi$ and $M'$ (which also still has $|M'|=\alpha$), there is no new bad edge.
Note that we have now modified $\varphi$ and $M'$ so that neither $e_1$ nor $e_1'$ may help to certify that $e$ is bad. 
Thus $e$ is no longer bad since in any proper partial edge-colouring there are at most two edges incident to $e$ coloured $\Phi(e)$.
This is a contradiction to the choice of $\beta$.

We may therefore conclude that $\beta=0$ and this completes the proof.
\end{proof}

\bibliographystyle{abbrv}
\bibliography{edgeprecolour2}

%%%%%%%%%%%%%%%%%%%%%%%%%%%%%%%%%%%%%%%%%%%%%%%%%%%%%%%%%%%%%%%%%%%%%%%%

\end{document}